\def\G1{\hbox{$\displaystyle{\mbox{\ding{172}}}$}}
\def\fileversion{2.1}
\def\filedate{1998/11/15}
\newcommand{\oneto}[1]{1, \ldots, {#1}}
\newcommand{\ie}{i.e., }
\newcommand{\insmat}[1]{\mathop{\mbox{\rm #1}}}  % for mathmode
\def\1{{\mathchoice {\roman {1\mskip-4mu l}} {\roman {1\mskip-4mu l}}
{\roman {1\mskip-4.5mu l}} {\roman {1\mskip-5mu l}}}}
\newcommand{\ip}[2]{{#1}^T{#2}}
\newcommand{\norm}[1]{\left\| #1 \right\|}
\newcommand{\goes}{\rightarrow}
\newcommand{\grad}{\nabla}
\newcommand{\union}{\cup}
\newcommand{\ldef}{:=}
\newcommand{\rdef}{=:}
\newcommand\refe[1]{(\ref{#1})}
   \def\R{\inmat{\rm I}\!\inmat{\rm R}}
   \def\N{\inmat{\mathrm I}\!\inmat{\mathrm N}}
    \newcommand{\field}[1]{\mathbb #1}
    \def\R{\field{R}}
    \def\N{\field{N}}
\def\openA{{{\mathrm A}\kern-.63em{\mathrm A}}}
\def\openB{{{\mathrm I}\kern-.16em {\mathrm B}}}
\def\openC{{\mathrm C\kern-.18cm\vrule width.6pt height 6pt depth-.2pt \kern.18cm}}
\def\openD{{{\mathrm I}\kern-.16em {\mathrm D}}}
\def\openF{{{\mathrm I}\kern-.16em {\mathrm F}}}
\def\openI{{{\mathrm I}\kern-.2em {\mathrm I}}}
\def\openH{{{\mathrm I}\kern-.16em {\mathrm H}}}
\def\openK{{{\mathrm I}\kern-.16em {\mathrm K}}}
\def\openN{{{\mathrm I}\kern-.16em {\mathrm N}}}
\def\openP{{{\mathrm I}\kern-.16em {\mathrm P}}}
\def\openQ{{\mathrm Q\kern-.21cm\vrule width.6pt height 6.2ptdepth-.2pt \kern.21cm}}
\def\openQ{{{\mathrm Q}\kern-.63em {\mathrm Q}}}
\def\openR{{{\mathrm I}\kern-.16em {\mathrm R}}}
\def\openS{{{\mathrm S}\kern-.68em{\mathrm S}}}
\def\openT{{{\mathrm T}\kern-.30em {\mathrm T}}}
\def\openZ{{{\mathrm Z}\kern-.28em{\mathrm Z}}}
   \def\Rank{\mathop{\insmat{rank}}}
  \DeclareMathOperator*{\Rank}{rank}
\newcommand{\rank}[1]{\Rank #1}
\newcommand{\ba}{\begin{array}}
\newcommand{\ea}{\end{array}}
\newcommand{\bc}{\begin{center}}
\newcommand{\ec}{\end{center}} \newcommand{\ds}{\displaystyle}
\newcommand{\be}{\begin{equation}}
\newcommand{\ee}{\end{equation}}
     \newcommand{\subjectto}{\insmat{soggetto a}}
     \newcommand{\minimize}{\insmat{min}}
     \newcommand{\subjectto}{\insmat{subject to}}
     \newcommand{\minimize} {\insmat{min}}
\newcommand{\problem}[4]
    {\ba{lc} {\ds #4_{#1}} & {#2} \\*
        {\subjectto} & {\ba[t]{c} #3 \ea}
     \ea}
\newcommand{\eprobmin}[4]
    {\begin{equation} \problem{#1}{#2}{#3}{\minimize} \label{#4}
         \end{equation}}
      \newtheorem{cor}[thm]{Corollario}
      \newtheorem{lem}[thm]{Lemma}
      \newtheorem{prop}[thm]{Proposizione}
      \newtheorem{fact}{Fatto}[section]
        \theoremstyle{definition}
      \newtheorem{defn}{Definizione}[section]
        \theoremstyle{remark}
      \newtheorem{remark}{Nota}[section]
        \theoremstyle{remark}
      \newtheorem{Note}[remark]{Nota}
      \newtheorem{conjecture}{Congettura}[section]
      \newtheorem{exa}{Esempio}[section]
      \newtheorem{thm}{Theorem}[section]
\newcommand{\xbar}{\bar{x}}
\newcommand{\pibar}{\bar{\pi}}
\newcommand{\mubar}{\bar{\mu}}
\def\G1{\hbox{$\displaystyle{\mbox{\ding{172}}}$}\xspace}
\newcommand {\grossone}{\G1}
\journal{Applied Mathematics and Computation}
\newtheorem{definition}{Definition}
\newtheorem{assumption}{Assumption}
\begin{document}

\begin{frontmatter}

%% Title, authors and addresses

%% use the tnoteref command within \title for footnotes;
%% use the tnotetext command for the associated footnote;
%% use the fnref command within \author or \address for footnotes;
%% use the fntext command for the associated footnote;
%% use the corref command within \author for corresponding author footnotes;
%% use the cortext command for the associated footnote;
%% use the ead command for the email address,
%% and the form \ead[url] for the home page:
%%
%% \title{Title\tnoteref{label1}}
%% \tnotetext[label1]{}
%% \author{Name\corref{cor1}\fnref{label2}}
%% \ead{email address}
%% \ead[url]{home page}
%% \fntext[label2]{}
%% \cortext[cor1]{}
%% \address{Address\fnref{label3}}
%% \fntext[label3]{}

\title{The use of Grossone in Mathematical Programming and Operations Research}

%% use optional labels to link authors explicitly to addresses:

\author[unicam]{Sonia De Cosmis}
\ead{sonia.decosmis@unicam.it}
\author[unicam]{Renato De Leone\corref{cor1}}
\ead{renato.deleone@unicam.it}
\address[unicam]{School of Science and Technology, Universit\`a di Camerino, via Madonna delle Carceri 9, Camerino (MC) 62032 Italy}
\cortext[cor1]{Corresponding author}

\begin{abstract}
%% Text of abstract
The concepts of infinity and infinitesimal in mathematics date back to anciens Greek and have always attracted great attention. Very recently, a new methodology has been proposed by Sergeyev \citep{Sergeyev:2} for performing calculations with infinite and infinitesimal quantities, by introducing an  infinite unit of measure expressed by the numeral \G1 (grossone). An important characteristic of this novel approach is its attention to numerical aspects. In this paper we will present some possible applications and use of \G1 in Operations Research and Mathematical Programming. In particular, we will show how the use of \G1 can be beneficial in anti--cycling procedure for the well--known simplex method for solving Linear Programming Problems and in defining exact differentiable Penalty Functions in Nonlinear Programming.
\end{abstract}

\begin{keyword}
Linear Programming \sep Simplex Method \sep Nonlinear Programming \sep Penalty Methods

%% keywords here, in the form: keyword \sep keyword
\MSC[2008]{90C05} \sep \MSC[2008]{90C30}
%% MSC codes here, in the form: \MSC code \sep code
%% or \MSC[2008] code \sep code (2000 is the default)

\end{keyword}

\end{frontmatter}

%%
%% Start line numbering here if you want
%%
% \linenumbers

%% main text
\section{Introduction}
A novel approach to infinite and infinitesimal numbers has been recently proposed by Sergeyev in a book and in a series of papers \citep{Sergeyev:2,Sergeyev:1,Sergeyev:3,Sergeyev:4}.
By introducing a new infinite unit of measure (the numeral  \textit{grossone}, indicated by  $\G1$) as the number of elements of the set  of the natural numbers, he shows that it is possible to effectively work with infinite and infinitesimal quantities and to solve many problems connected  to them  in the field of  applied and  theoretical mathematics.
In this new system, there is the opportunity to treat infinite and infinitesimal numbers as particular cases of a single structure, offering a new view and alternative approaches to important aspects of mathematics such as sums of series (in particular, divergent series), limits, derivatives, etc.

The new numeral \textit{grossone} can be  introduced by describing its properties (in a similar way as done in the past  with the introduction of 0 to switch from natural to integer numbers). The Infinity Unit Axiom  postulate (IUA) \citep{Sergeyev:1,Sergeyev:2} is  composed of three parts: Infinity, Identity, and Divisibility:
\begin{itemize}
  \item \textit {Infinity.} Any finite natural number $n$ is less than grossone, \ie $n < \grossone $.
  \item \textit {Identity.} The following relationships link \grossone\; to the identity elements $0$ end $1$
   \begin{equation}
    0 \cdot \grossone = \grossone \cdot 0 = 0 ,\; \grossone - \grossone =0,\; \frac{\grossone}{\grossone}=1 , \; \grossone ^{0} =1 ,\; 1^{\grossone} =1 , \quad 0^{\grossone} = 0
  \end{equation}
  \item  \textit {Divisibility.} For any  finite natural number $n$, the sets $\N _{k,n},1 \leq k \leq n $,
  \begin{equation}
      \N_{k,n}={k,k+n,k+2n,k+3n,....}, \quad 1\leq k\leq n, \quad \bigcup_{k=1}^{n} \N_{k,n}= \N
  \end{equation}
   have the same number of elements indicated by  $\frac{\grossone}{n} $.

\end{itemize}
The axiom above states that the  infinite number \grossone, greater than any finite number, behaves as any natural number with the elements $0$ and $1$. Moreover, the quantities $\frac{\grossone}{n} $ are integers for any natural $n$.
This axiom is added to the standard  axioms of  real numbers and, therefore, all standard properties (commutative, associative, existence of inverse, etc.) also apply to $\G1$.

Sergeyev \citep{Sergeyev:3,Sergeyev:4} also defines a new way to express the infinite and infinitesimal numbers using a register similar to traditional positional number system, but with base number \grossone. A number $\mathbf{C}$ in this new system can be constructed  by subdividing it  into groups corresponding to powers of \grossone\; and has the following representation:
\begin{equation}
\mathbf{C} = c_{p_{m}}\grossone^{p_{m}} +....+c_{p_{1}}\grossone^{p_{1}}+c_{p_{0}}\grossone^{p_{0}}+c_{p_{-1}}\grossone^{p_{-1}}+....+c_{p_{-k}}\grossone^{p_{-k}}.
\label{somma}
\end{equation}
%The formula
%\begin{equation}
% c_{p_{m}}\grossone^{p_{m}}.....c_{p_{1}}\grossone^{p_{1}}c_{p_{0}}\grossone^{p_{0}}c_{p_{-1}}\grossone^{p_{-1}}....c_{p_{-k}}\grossone^{p_{-k}}
%\label{prodotto}
%\end{equation}
%is the number $\mathbf{C}$,
where the quantities $c_{i}$ (the \textit{grossdigits}) and $p_{i}$ (the \textit{grosspowers}) are expressed by the traditional numerical system for representing finite numbers (for example, floating point numbers).  The grosspowers are sorted in descending order:
$$p_{m} > p_{m-1} > .... > p_{1} > p_{0} > p_{-1} >... p_{-(k-1)} > p_{-k}$$
with  $p_{0} = 0$.

In this new numeral system, finite numbers are represented by numerals with only one grosspower $p_{0} = 0 $.
%In fact, if we have a number $ \mathbf{C} $ such that $ m = k = 0 $ in the representation (\ref{prodotto}), we have $ \mathbf{C} = c_{0} \grossone^{0}$, therefore, the number $ \mathbf{C} $ in this case does not contain grossone and is equal to  grossdigit $ c_ {0} $ expressed by traditional finite numerical system.
Infinitesimal numbers are represented by numeral $ \mathbf{C}$  having only negative finite
or infinite grosspowers. The simplest infinitesimal number is $\grossone^{-1}$ for which
\begin{equation}
  \G1^{-1} \grossone=\grossone\;\grossone^{-1}=1.
 \label{inversa moltiplicazione}
\end{equation}
We note that infinitesimal numbers are not equal to zero. In particular, $ \frac{1}{\grossone}>0$.
Infinite numbers are expressed by numerals having at least one finite or infinite grosspower greater than zero.

A peculiar characteristic of the newly proposed numeral system is its attention to its numerical aspects and to applications.
The Infinity Computer proposed by Sergeyev is able to execute computations with infinite, finite, and infinitesimal numbers numerically (not symbolically) in a novel framework.

In this paper we will present two possible uses of this numeral system in Mathematical Programming and Operations Research. In particular, in Section \ref{sect:simplex} we will show a simple way to implement anti--cycling strategies in the simplex method for solving Linear Programming problems. Various anti--ciclyng procedures have been proposed and implemented in state--of-the--art softwares. The lexicographic strategies has received particular attention since it allows, in contrast to Bland's rule, complete freedom in choosing, at each iteration,  the entering variable.
In Section \ref{sect:nonlinear} we revert our attention to Nonlinear Programming problems and, in particular, to differentiable penalty functions. In the new numeral system it is possible to define an exact, differentiable penalty function and we will show that stationary points of this penalty function are KKT points for the original Nonlinear Programming problem. Two simple examples are also provided showing the effectiveness of this approach. Conclusions and indications for further applications of \G1 in Mathematical Programming are reported in Section \ref{sect_conclusions}.

We briefly describe our notation now. All vectors are column vectors
and will be indicated with lower case Latin letter ($x$, $z$,
$\ldots$). Subscripts indicate components of a vector, while
superscripts are used to identify different vectors.
Matrices will be indicated with upper case roman letter ($A$,
$B$, $\ldots$). If $A \in \R^{m \times n}$, $A_{.j}$ is the $j$--th column of $A$; if $B \subseteq \left\{\oneto{n}\right \}$, $A_{.B}$ is the submatrix of $A$ composed by all columns $A_{.j}$ such that $j \in B$.
The set of real numbers and the set of nonnegative real numbers will be denoted by
$\R$ and $\R_+$  respectively. The rank of a matrix $A$ will be indicated by $\rank{A}$.
The space of the $n$--dimensional vectors with real components
will be indicated by $\R^n$ and $\R^n_+$ is an abbreviation for the nonnegative orthant in $\R^n$.
The symbol $\norm{x}$ indicates the Euclidean norm of a vector $x$.
Superscript $^T$ indicates transpose.
The scalar product of two vectors $x$ and $y$ in $\R^n$ will be denoted by $\ip{x}{y}$. Here and throughout the symbols $\ldef$ and $\rdef$ denote definition of the term on the left and the right sides of each symbol,  respectively.
The gradient $\nabla f(x)$ of a continuously differentiable function
 $f:\R^n \goes \R$ at a point $x\in \R^n$ is assumed to be a column vector.
 If $F:\R^n \goes \R^m$ is a continuously differentiable
 vector--valued function, then $\nabla F(x)$ denotes the Jacobian matrix
 of $F$ at $x\in \R^n$.

\section{Lexicograhic rule and grossone}
\label{sect:simplex}
The simplex method, originally proposed by G.B. Dantzig \citep{Dantzig} more than half a century ago, is still today one of
the most used algorithms for solving Linear Programming problems.
Finite termination of the method can only be guaranteed if special techniques are employed to eliminate cycling. In this section we will show how in the new numeral system it is very simple to implement such anti--cycling rules.

Given a matrix $A \in \R^{m\times n}$ with $\rank{A} = m$ and vectors $b \in \R^m$ and $c \in \R^n$, the Linear Programming problem in standard form can be stated as follows
\eprobmin{x}{\ip{c}{x}}{Ax = b \\ x \geq 0.}{lp:pri}

The simplex algorithm  moves from a Basic Feasible Solution (BFS) to an adjacent Basic Feasible Solution  until an optimal solution is reached or unboundness of the problem is detected.

More precisely, a submatrix $A_{B.}\in \R^{m\times m}$ is a  basis matrix if it is nonsingular; a point $\xbar \in \R^n$ is a BFS if it is feasible and the columns of $A$ corresponding to positive components of $\xbar$ are linearly independent. Basic Feasible Solutions correspond to vertices of the feasible region. A vertex $\xbar$ is non--degenerate if exactly $m$ components of $\xbar$ are positive: in this case there is a single Basis matrix associated to the point. A vertex $\xbar$ is degenerate if fewer than $m$ components are strictly positive: in this case more than one basis matrix can be associated to the point.

Finding an initial Basic Feasible Solution, if it exists, requires to solve (always using the Simplex Method) an auxiliary problem, whose initial solution is trivially obtained adding artificial variables.

A single iteration of the (primal) simplex method requires the following steps:

\begin{enumerate}
\item[]{\bf Step 0} Let $B \subseteq \left\{\oneto{n} \right \}$ be the  {\it
current} base and let $x \in X$ the {\it current } BFS

\[ x_B = A_{.B}^{-1}b \geq 0, \quad x_N = 0, \quad |B| = m. \]

Assume

\[ B = \left \{j_1, j_2, \ldots, j_m \right \} \]
 and
\[ N = \left\{\oneto{n} \right \} \setminus B = \left \{j_{m+1}, \ldots, j_n \right
\}. \]

\item[]{\bf Step 1}  Compute
\[ \pi = A_{.B}^{-T}c_B \]
and the reduced cost vector
\[ \bar{c}_{j_k} = c_{j_k} - \ip{A_{.j_k}}{\pi}, \quad k = m+1, \ldots, n. \]

\item[]{\bf Step 2} If
\[ \bar{c}_{j_k} \geq 0, \quad \forall k = m+1, \ldots, n \]
the current      point is an optimal BFS and the algorithm stops.

Instead, if  $\bar{c}_N \not\geq 0$, choose  $j_r$ with  $r \in \left \{m+1, \ldots, n \right \}$ such that $\bar{c}_{j_r} < 0$. This is the variable {  candidate to enter the base}.

\item[]{\bf Step 3} Compute
\[ \bar{A}_{.j_r} = A_{.B}^{-1} A_{.j_r} \]

\item[]{\bf Step 4} If
\[ \bar{A}_{.j_r} \leq 0 \]
the problem is unbounded below and the algorithm stops.

Otherwise, compute

 $$\bar{\rho} = \min_{i : \bar{A}_{ij_{r}}>0} \left \{ \frac{\left(A_{.B}^{-1}b \right)_i}{\bar{A}_{ij_{r}}} \right \}$$
and let $s\in \left \{\oneto{m}\right \}$ such that
$ \ds \left \{ \frac{\left(A_{.B}^{-1}b \right )_s}{\bar{A}_{sj_{r}}} \right \}= \bar{\rho} $; $j_s$ is the {  leaving variable}.

\item[]{\bf Step 5} Define
\[ \ba{lcll}
 \bar{x}_{j_k} & = & 0, & k = m+1, \ldots, n, k \neq r  \\
 \bar{x}_{j_r}& = & \bar{\rho} \\
 \bar{x}_B(\rho)    & = & A_{.B}^{-1}b - \bar{\rho} \bar{A}_{.j_r}.\ea \]
and
\[ \bar{B} = B \setminus \{j_s\} \union \{j_r \} =
  \left \{j_1, j_2, \ldots, j_{s-1}, j_r, j_{s+1}, \ldots, j_m \right \} \]

\end{enumerate}

Note that, when a non--degenerate step is performed, the objective function value strictly decreases. Therefore, the Simplex Method will terminate after a finite number of steps if all the BFS are non--degenerate.
In case of degenerate BFS, the objective function value may remain the same for a number of steps and the algorithm will cycle. Therefore, specific anti--cycling rules must be implemented to avoid this negative feature.

Among the various anti--cycling criteria, the lexicographic pivoting rule \citep{gow:55,Terlaky} has received special attention since, in contrast to other rules such as Bland's rule, there is a complete freedom in choosing the entering variable.

The lexicographic simplex method requires, at each iteration, to choose the leaving variable using a specific procedure (the lexicographic rule) \citep{Terlaky}.

Let $B_0$ be the initial base and $N_0 = \left\{\oneto{n} \right \} \setminus B_0$.
We can always assume, after columns reordering, that $A$ has the form
\[
A = \left[ A._{B_0} \quad \vdots \quad A._{N_0} \right]
\]
Let $$ \bar{\rho}= \min_{i : \bar{A}_{ij_{r}}>0}\frac{(A._{B}^{-1}b)_{i}}{\bar{A}_{ij_{r}}}$$
if such minimum value is reached in only one index, this is the leaving variable.
Otherwise, let
\[ {\cal I}_1 \ldef \left \{i \in \left\{\oneto{m}\right\}: \bar{A}_{ij_{r}}>0 \insmat{ and }  \frac{(A._{B}^{-1}b)_{i}}{\bar{A}_{ij_{r}}}= \bar{\rho} \right \} \]
and
\[ \bar{\rho}_1 \ldef \min_{i \in {\cal I}_1}\frac{(A._{B}^{-1}A._{B_0})_{i1}}{\bar{A}_{ij_{r}}}. \]
and choose the index $i_1 \in {\cal I}_1$ getting the minimum, that is the index $i_1$ such that
\[ \frac{(A._{B}^{-1}A._{B_0})_{{i_1}1}}{\bar{A}_{{i_1}j_{r}}} = \bar{\rho}_1. \]
If the  minimum is reached by only one index $i_1$, then this is the leaving variable. Otherwise, let
\[ {\cal I}_2 \ldef \left \{i \in {\cal I}_1: \frac{(A._{B}^{-1}A._{B_0})_{{i}1}}{\bar{A}_{{i}j_{r}}} = \bar{\rho}_1 \right \} \insmat{   and  }
 \bar{\rho}_2 \ldef \min_{i \in {\cal I}_2}\frac{(A._{B}^{-1}A._{B_0})_{i2}}{\bar{A}_{ij_{r}}} \]
and choose the index $i_2 \in {\cal I}_2$ getting the minimum, that is the index $i_2$ such that
\[ \frac{(A._{B}^{-1}A._{B_0})_{{i_2}2}}{\bar{A}_{{i_2}j_{r}}} = \bar{\rho}_2. \]

This procedure will terminate providing a single index since  the rows of the matrix $(A_{.B}^{-1}A_{.{B_0}})$ are linearly independent.
The finiteness of the lexicographic simplex method follows from the simple observation that the vector whose first element is the current objective function value and the other components are the reduced costs, strictly lexicographically decreases  at each iteration.

The procedure outlined above is equivalent to
perturb each component of the RHS vector $b$ by a very small quantity \citep{Charnes}.

If this perturbation is small enough, the new Linear Programming problem is nondegerate and the simplex method produces exactly the same pivot sequence as  the lexicographic pivot rule.

However, is very difficult to determine how small this perturbation must be. More often a symbolic perturbation is used (with higher computational costs).

In the new numeral system obtained by the introduction of \G1 we propose to replace $b_i$ by
\begin{equation} \widetilde{b}_{i} =  b_{i} + \sum_{j \in B_{0}} A_{ij} \G1^{-j}. \label{b:mod} \end{equation}
More specifically, let
\[ e =  \left[ \begin{array}{c}
 \G1^{-1} \\
 \G1^{-2} \\
 \vdots \\
 \G1^{-m}  \\
  \end{array}
 \right] \]
and define
\begin{equation}\overline{\widetilde{b}} = A._{B}^{-1} ( b + A._{B_0}e ) = A._{B}^{-1}b + A._{B}^{-1}A._{B_0} e. \label{bb:mod} \end{equation}
Therefore, $\overline{\widetilde{b}}_i = (A._{B}^{-1}b)_{i} + \ds \sum_{k=1}^{m}( A._{B}^{-1}A._{B_0})_{i_{k}} \G1^{-k}$
 and
\begin{eqnarray}
 \lefteqn{\min_{i: \bar{A}_{ij_{r}}>0}\frac{(A._{B}^{-1}b)_{i}+\ds \sum_{k=1}^{m}( A._{B}^{-1}A._{B_0})_{i_{k}} \G1^{-k}}{\bar{A}_{ij_{r}}} }  \nonumber\\
 & = & \min_{i : \bar{A}_{ij_{r}}>0} \frac{(A._{B}^{-1}b)_{i}}{\bar{A}_{ij_{r}}}+ \frac {( A._{B}^{-1}A._{B_0})_{i{1}}} {\bar{A}_{ij_{r}}}\G1^{-1}+ \ldots + \frac{( A._{B}^{-1}A._{B_0})_{i{m}}}
  {\bar{A}_{ij_{r}}} \G1^{-m}
 \label{grossone:choose} \end{eqnarray}
Taking into account the properties of the power of \G1, the index $i$ that will be chosen by the formula
\refe{grossone:choose} will be exactly the same obtained by the lexicographic pivoting rule outlined before.

\section{Nonlinear programming} \label{sect:nonlinear}

Nonlinear constrained optimization problems  are an important class of problems with a broad range of engineering, scientific, and operational applications. The problem can be stated as follows:

\eprobmin{x}{f(x)}{g(x) \leq 0 \\ h(x) = 0}{nlp:gen}
where  $f: \R^n \goes \R$,   $g:\R^n \goes \R^m$ , and $h:\R^n \goes \R^p$. For simplicity we will assume that the functions $f$, $g$, and $h$  are twice continuously differentiable. The feasible set will be indicated by $X$.
In this section we will show how, using the new numeral system, it is possible to introduce exact differentiable penalty functions and we will discuss the relationship between stationary point of the penalty function and KKT points of the original constrained problem. After introducing some basic concepts and definitions, we consider in Subsection \ref{subsect:eq}   the equality constrained case and later, in Subsection \ref{subsect:ineq}, the most general case of equality and inequality constraints.

\begin{definition}
Given a point $x^0 \in X$, the  set of active constraints at $x^0$ is
 \[ I(x^0) = \left \{1, \ldots p \right \} \cup \left\{i \in \left\{ 1,\ldots m \right\}: g_{i}(x^0) = 0\right\}.\]
\end{definition}

\begin{definition}
The linear independence constraint qualification
(LICQ) condition  is said to hold true at $x^0\in X$ if the set of gradients of the active constraints at $x^0$ is linearly independent.
\end{definition}

The associated Lagrangian function $L(x, \mu, \pi)$ is :
\begin{equation}
L(x, \pi, \mu)  \ldef  f(x) + \ip{\mu}{g(x)} + \ip{\pi}{h(x)}
\end{equation}
where $\pi \in \R^p$ and $\mu \in \R^m$ are the multipliers associated to the equality and inequality constraints respectively.
\begin{definition}
A triplet $\left(x^*, \pi^* , \mu^* \right)$ is a Karush-–Kuhn–-Tucker (KKT) point if
\begin{eqnarray*}
\grad_x L(x^*, \mu^*, \pi^*) = \grad f(x^*) + \sum_{i=1}^m  \grad g_i(x^*) \mu_i^* + \sum_{j=1}^p  \grad h_j(x^*) \pi_j^* &= &0 \\
\grad_\mu L(x^*, \mu^*, \pi^*) = g(x^*) & \leq & 0  \\
\grad_\pi L(x^*, \mu^*, \pi^*)  =  h(x^*) &= &0 \\
\mu^* &  \geq & 0 \\
\ip{ \mu^*}{\grad_\mu L(x^*, \mu^*, \pi^*)} &  =& 0
\end{eqnarray*}
\end{definition}
The following theorem \citep{mangasarian.69:nonlinear} states  the first order necessary optimality conditions for the nonlinear optimization problems.

\begin{thm}
Consider the nonlinear optimization problem \refe{nlp:gen}. Let $x^*\in X$ be a local minimum of Problem \refe{nlp:gen}, and assume that at  $x^*$ the LICQ\footnote{Weaker Constraint Qualification conditions can be imposed. See \citep{solodov.cq:10} for a review of different Constraints Qualification conditions and the relationship among them.} condition holds true. Then, there exist vectors $\mu^*$ and $\pi^*$ such that the triplet $\left(x^*, \pi^* , \mu^* \right)$ is a Karush-–Kuhn–-Tucker point.
\end{thm}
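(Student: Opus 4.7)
The plan is to reduce the theorem to the classical Karush--Kuhn--Tucker derivation: show that, under LICQ, the tangent cone to $X$ at $x^*$ coincides with the linearizing cone of active gradients, then translate first-order optimality into an infeasible linear system whose Farkas certificate supplies the required multipliers.

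First I would introduce the linearizing cone
\[
 L(x^*) \ldef \left\{ d \in \R^n : \ip{\nabla g_i(x^*)}{d} \leq 0,\ i \in I(x^*)\cap\{\oneto{m}\},\ \ip{\nabla h_j(x^*)}{d} = 0,\ j = \oneto{p} \right\}
\]
and the tangent cone $T(x^*)$, the set of limits $\lim_k (x^k - x^*)/t_k$ for feasible sequences $x^k \to x^*$ with $t_k \downarrow 0$. Differentiability of $g$ and $h$ yields $T(x^*) \subseteq L(x^*)$ with no constraint qualification. The reverse inclusion is where LICQ enters and is the main technical obstacle: given $d \in L(x^*)$, linear independence of the active constraint gradients makes the implicit function theorem applicable to the system that keeps the equalities and tight active inequalities at the boundary along a smooth curve, producing a feasible arc $x(t) = x^* + t d + o(t)$ with $x(0) = x^*$ and $\dot x(0) = d$. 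Continuity keeps the remaining inequality constraints strictly negative for small $t \geq 0$, so $x(t) \in X$ and $d \in T(x^*)$.

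Second, local optimality of $x^*$ gives $f(x(t)) \geq f(x^*)$ along any such arc, hence $\ip{\nabla f(x^*)}{d} \geq 0$ for every $d \in T(x^*) = L(x^*)$. Equivalently, the linear system
\[
 \ip{\nabla f(x^*)}{d} < 0,\quad \ip{\nabla g_i(x^*)}{d} \leq 0\ \ (i \in I(x^*)\cap\{\oneto{m}\}),\quad \ip{\nabla h_j(x^*)}{d} = 0\ \ (j = \oneto{p})
\]
admits no solution $d \in \R^n$.

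Finally, I would invoke Farkas' lemma (or equivalently Motzkin's theorem of the alternative) on this inconsistency to extract nonnegative $\mu_i^*$ for $i \in I(x^*)\cap\{\oneto{m}\}$ and free $\pi_j^*$ such that
\[
 -\nabla f(x^*) = \sum_{i \in I(x^*)\cap\{\oneto{m}\}} \mu_i^* \nabla g_i(x^*) + \sum_{j=1}^p \pi_j^* \nabla h_j(x^*).
\]
Setting $\mu_i^* = 0$ for inactive indices preserves this identity and automatically enforces the complementarity condition $\ip{\mu^*}{g(x^*)} = 0$; primal feasibility of $x^*$ supplies $g(x^*) \leq 0$ and $h(x^*) = 0$; non-negativity of $\mu^*$ is built in by construction. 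Thus $(x^*, \pi^*, \mu^*)$ is a KKT point, as required.
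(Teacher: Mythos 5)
The paper never proves this theorem: it is quoted as a known classical result and attributed to Mangasarian \citep{mangasarian.69:nonlinear}, so there is no in-paper argument to compare yours against. Your proof is the standard textbook derivation of the KKT conditions under LICQ (tangent cone equals linearizing cone, then Farkas/Motzkin to produce the multipliers), and as a sketch it is correct---essentially the argument found in the cited literature. One step is stated loosely and would need repair in a full write-up: after the implicit-function-theorem arc is built so as to zero out the equalities and those active inequalities with $\ip{\grad g_i(x^*)}{d}=0$, you claim that ``continuity keeps the remaining inequality constraints strictly negative.'' That is true for the \emph{inactive} constraints, but not for the active constraints with $\ip{\grad g_i(x^*)}{d}<0$: these vanish at $t=0$, so continuity gives nothing there, and you must instead use the first-order expansion $g_i(x(t))=t\,\ip{\grad g_i(x^*)}{d}+o(t)<0$ for small $t>0$. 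This is a one-line fix, not a flaw in the strategy. The final step is legitimate as stated: the inconsistency of your linear system says exactly that $-\grad f(x^*)$ lies in the polar of the linearizing cone, which equals the finitely generated cone of the active gradients (finitely generated cones are closed, so Farkas applies); setting the inactive multipliers to zero then delivers stationarity, feasibility, nonnegativity, and complementarity as required by the paper's definition of a KKT point.
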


Different algorithms have been proposed and studied for solving the general nonlinear optimization problems. We refer the interested reader to the classical book of Fletcher \citep{fletcher.90:practical} and to the more recent book of Boyd and Vandenberghe \citep{Boyd.Vandenberghe.04:convex} for a general discussion of Nonlinear Programming  algorithms. Here we will concentrate our attention on exact penalty functions and we will show how the use of $\G1$ can be beneficial in defining exact differentiable penalty functions.

Let $\phi: x \in \R^n \goes \R_+$ be a continuously differentiable function such that

\[ \phi(x)  \left \{ \ba{ll} = 0 & \insmat{   if  } x \in X  \\
                          > 0 & \insmat{   otherwise} \ea \right . \]

\noindent A possible choice for the function  $\phi(x)$ is
\[ \phi(x) =  \ds \sum_{i=1}^m \insmat{max} \bigl\{g_{i}(x), 0 \bigr \}^2 + \sum_{j=1}^k h_{j}(x)^2 \]
Note that this function is continuously differentiable but not twice differentiable.

From the function $\phi(x)$, the  exterior penalty function
\[ P( x,  \epsilon) = f(x) + \frac{1}{ 2 \epsilon}\phi(x) \]
can be constructed and the following unconstrained optimization problem can be defined:
\begin{equation} \minimize_{x}{P( x,  \epsilon).}\label{nlp:penalty} \end{equation}

It can be shown that there is no finite value of the penalty parameter  $\epsilon$ for which, by solving Problem \refe{nlp:penalty},  a solution of the original Problem \refe{nlp:gen} is obtained. Sequential penalty methods require to solve a sequence of minimization problems (similar to Problem \refe{nlp:penalty}) for decreasing values of the parameter $\epsilon$. It is possible to construct exact non--differentiable penalty functions \citep{fletcher.90:practical} as well as differentiable exact penalty function but, in this case,  it is necessary to introduce terms related to first order optimality conditions \citep{dipillo.grippo:exact}, thus making the penalty function much more complicate.

%A Sequential Penalty Method to find a solution to a problem bounded is the following:
%\\
%
%Let $\{ \epsilon_k \} \downarrow 0$ and $\tau > 0$
%
%\begin{itemize}
%  \item[\textbf{Step 0}] Set $k = 0$
%  \item[\textbf{Step 1}] Let $x(\epsilon_k)$ be an optimal solution of the unconstrained differentiable problem
%    \[ \min P( x; \epsilon_k) \]
%  \item[\textbf{Step 2}] If $ \phi(x( \epsilon_k))\prec \tau$  then STOP
%
%    else set $k = k +1$ and return to Step 1
%\end{itemize}

We propose here to substitute the term $1/\epsilon$ with $\G1$. In the next subsection we will present convergence  results for the simpler case of equality constraints, while Subsection \ref{subsect:ineq} will present results and a simple example of exact penalty function for the more general case.

\subsection{The equality constraint case.} \label{subsect:eq}
Consider  the  optimization problem with equality constraints:
\eprobmin{x}{f(x)}{ h(x) = 0}{nlp:eq}
where $f$ and $h$ are defined as before.

The following theorem \citep[Theorem 12.1.1]{fletcher.90:practical} states the convergence for the sequential penalty method.

\begin{thm}
Let $f(x)$ be bounded below in the (nonempty) feasible region, and let $\{\epsilon_k\}$ be a monotonic non-increasing sequence such that $\{\epsilon_k\} \downarrow 0$, and assume that for each $k$ there exists a global minimum $x(\epsilon_k)$ of $P( x; \epsilon_k)$. Then

\begin{itemize}
  \item[(i)]  $ \{ P( x(\epsilon_k); \epsilon_k) \}$ is monotonically non--decreasing
  \item[(ii)] $ \{  \phi(x( \epsilon_k )) \}$ is monotonically non--increasing
 \item[(iii)] $f(x(\epsilon_k))$ is monotonically non--decreasing
\end{itemize}
Moreover, $\lim_k h(x(\epsilon_k)) = 0$ and each limit point $x^*$ of the sequence $\{x(\epsilon_k)\}_k$ solves Problem \refe{nlp:eq}.
\end{thm}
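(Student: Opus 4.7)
The plan is to follow the classical two-sided minimization argument for penalty methods, which extracts all three monotonicity statements from the single fact that $x(\epsilon_k)$ is a global minimizer of $P(\cdot;\epsilon_k)$. Statement (ii) is the hard one; once it is in hand, (iii) and (i) drop out, and the limiting claims follow from a uniform upper bound obtained by testing $P(\cdot;\epsilon_k)$ at any feasible reference point.

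I would begin by writing the two minimization inequalities
\[
f(x(\epsilon_k)) + \tfrac{1}{2\epsilon_k}\phi(x(\epsilon_k)) \leq f(x(\epsilon_{k+1})) + \tfrac{1}{2\epsilon_k}\phi(x(\epsilon_{k+1})),
\]
\[
f(x(\epsilon_{k+1})) + \tfrac{1}{2\epsilon_{k+1}}\phi(x(\epsilon_{k+1})) \leq f(x(\epsilon_k)) + \tfrac{1}{2\epsilon_{k+1}}\phi(x(\epsilon_k)),
\]
and adding them so that the $f$-terms cancel:
\[
\left(\tfrac{1}{2\epsilon_{k+1}} - \tfrac{1}{2\epsilon_k}\right)\bigl(\phi(x(\epsilon_{k+1})) - \phi(x(\epsilon_k))\bigr) \leq 0.
\]
Because $\{\epsilon_k\}$ is non-increasing, the first factor is nonnegative, which forces $\phi(x(\epsilon_{k+1})) \leq \phi(x(\epsilon_k))$; that is (ii). Substituting this back into the first inequality yields $f(x(\epsilon_k)) \leq f(x(\epsilon_{k+1}))$, giving (iii). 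For (i), I would argue directly from the minimization property:
\[
P(x(\epsilon_{k+1});\epsilon_{k+1}) - P(x(\epsilon_k);\epsilon_k) \geq P(x(\epsilon_{k+1});\epsilon_{k+1}) - P(x(\epsilon_{k+1});\epsilon_k) = \left(\tfrac{1}{2\epsilon_{k+1}} - \tfrac{1}{2\epsilon_k}\right)\phi(x(\epsilon_{k+1})) \geq 0,
\]
using $\phi \geq 0$.

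Next I would fix any feasible $\bar{x}$ (one exists by the nonemptiness assumption); since $\phi(\bar{x})=0$, the optimality of $x(\epsilon_k)$ gives the uniform bound $P(x(\epsilon_k);\epsilon_k) \leq f(\bar{x})$. Combined with (iii), which pins $f(x(\epsilon_k)) \geq f(x(\epsilon_1))$, this rearranges to $\phi(x(\epsilon_k)) \leq 2\epsilon_k\bigl(f(\bar{x})-f(x(\epsilon_1))\bigr) \to 0$. Since $\phi(x)=\sum_j h_j(x)^2$ in the equality-constrained setting, this forces $h(x(\epsilon_k)) \to 0$. Finally, let $x^*$ be a limit point with $x(\epsilon_{k_j}) \to x^*$; continuity of $h$ gives $h(x^*)=0$, hence $x^* \in X$. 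For any feasible $\hat{x}$, optimality gives $f(x(\epsilon_{k_j})) \leq P(x(\epsilon_{k_j});\epsilon_{k_j}) \leq P(\hat{x};\epsilon_{k_j}) = f(\hat{x})$; passing to the limit via continuity of $f$ yields $f(x^*) \leq f(\hat{x})$ for every feasible $\hat{x}$, so $x^*$ solves Problem \refe{nlp:eq}.

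The main subtlety I anticipate is not computational but logical: one must establish $\phi(x(\epsilon_k)) \to 0$ without circularly invoking the lower bound on $f$ over the whole space. The clean workaround is to prove the monotonicity statements \emph{first}, so that $f(x(\epsilon_k))$ is automatically bounded below by $f(x(\epsilon_1))$ along the sequence, independently of any hypothesis on $f$ outside the feasible region. Beyond that, the argument is routine and uses only continuity of $f$ and $h$ and the existence of one feasible reference point; no differentiability of $f$, $h$, or $\phi$ is needed for this theorem.
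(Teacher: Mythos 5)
The paper itself offers no proof of this theorem: it is quoted verbatim from Fletcher's book (Theorem 12.1.1) as a cited result, so there is no internal proof to compare against. Your argument is correct and is precisely the classical proof of that result: the two cross-minimization inequalities give (ii), back-substitution into the first inequality gives (iii), the penalty-weight comparison $P(x(\epsilon_{k+1});\epsilon_{k+1}) \geq P(x(\epsilon_{k+1});\epsilon_k) \geq P(x(\epsilon_k);\epsilon_k)$ gives (i), and the uniform bound $P(x(\epsilon_k);\epsilon_k) \leq f(\bar{x})$ at a feasible reference point, combined with the lower bound $f(x(\epsilon_k)) \geq f(x(\epsilon_1))$ from (iii), yields $\phi(x(\epsilon_k)) \to 0$ and hence feasibility and optimality of every limit point. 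One small caveat worth recording: with a merely non-increasing sequence, a tie $\epsilon_{k+1} = \epsilon_k$ makes the first factor in your key product inequality vanish, so (ii) and (iii) as stated need either strict decrease of $\{\epsilon_k\}$ or a tie-breaking convention on the choice of minimizers; this defect is inherited from the theorem statement (and from the source it is copied from), not introduced by your proof.
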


In order to derive first order optimality condition, we will make the following assumptions on the functions $f(x)$ and $h(x)$, on the gradient $\grad f(x)$ and on the Jacobian $\grad h(x)$:

\begin{assumption}\label{assumpt:expand}
If
\[  x = x^{0} + \G1^{-1} x^{1} + \G1^{-2}x^{2}+\ldots \]
with $x^i  \in \R^n$, then
\begin{eqnarray*}
 f(x) &= & f(x^0) + \G1^{-1} f^{(1)} (x) + \G1^{-2}f^{(2)}(x) +\ldots \\
 h(x) &= & h(x^0) + \G1^{-1} h^{(1)} (x) + \G1^{-2}h^{(2)}(x) +\ldots \\
\nabla f(x) & = & \nabla f(x^{0})+ \G1^{-1} F^{(1)} (x) + \G1^{-2}F^{(2)}(x) + \ldots \\
\nabla h(x) & = & \nabla h(x^{0})+ \G1^{-1} H^{(1)} (x) + \G1^{-2}H^{(2)}(x) + \ldots
\end{eqnarray*}
where $f^{(i)}: \R^n \goes \R$,   $h^{(i)}:\R^n \goes \R^p$, $F^{(i)}: \R^n \goes \R^n$, $H^{(i)}:\R^n \goes \R^{p\times n}$ are all finite--value functions.
\end{assumption}
In the sequel, we will always suppose that the assumption above holds. Note that  Assumption 1 is satisfied (for example) by functions that are product of polynomial functions in a single variable, \ie $$p(x) = p_1(x_1) p_2(x_2) \cdots p_n(x_n)$$ where $p_i(x_i)$ is a polynomial function.

We are now ready to state the convergence theorem for the exact penalty function using $\G1$.

\begin{thm}
Consider Problem \refe{nlp:eq} and define the following unconstrained problem
\begin{equation}
 \minimize_x \quad f(x) + \frac{\G1}{2}  \| h(x)\|^{2}.
\label{unconst:eq}
\end{equation}
Let
\[{x^*} = {x}^{*0} + \G1^{-1} {x}^{*1} + \G1^{-2} {x}^{*2}+\ldots\]
 be a stationary point for \refe{unconst:eq} and assume that the LICQ condition holds true at ${x}^{*0}$. Then,
the pair  $\left(x^{*0},{\pi^*}=  h^{(1)}({x^*}) \right)$ is a  KKT  point of \refe{nlp:eq}.
\end{thm}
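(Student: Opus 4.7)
My plan is to differentiate the grossone--penalty objective in~\refe{unconst:eq}, expand the resulting stationarity equation in grosspowers via Assumption~\ref{assumpt:expand}, and equate the coefficients at grosspowers $1$ and $0$ separately. The $\G1^1$ coefficient combined with LICQ will force $h(x^{*0}) = 0$ (so $x^{*0}$ is feasible for~\refe{nlp:eq}); the $\G1^0$ coefficient will then collapse to the Lagrangian stationarity condition with multiplier $\pi^* = h^{(1)}(x^*)$.

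Concretely, stationarity of~\refe{unconst:eq} at $x^*$ reads
\[ \grad f(x^*) + \G1 \sum_{j=1}^p h_j(x^*) \, \grad h_j(x^*) = 0, \]
which in compact form I write as $\grad f(x^*) + \G1 \, \grad h(x^*) \, h(x^*) = 0$. I will substitute the grosspower expansions from Assumption~\ref{assumpt:expand}, namely $\grad f(x^*) = \grad f(x^{*0}) + \G1^{-1} F^{(1)}(x^*) + \ldots$, $\grad h(x^*) = \grad h(x^{*0}) + \G1^{-1} H^{(1)}(x^*) + \ldots$, and $h(x^*) = h(x^{*0}) + \G1^{-1} h^{(1)}(x^*) + \G1^{-2} h^{(2)}(x^*) + \ldots$, and expand the product. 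Since every coefficient function is finite--valued, each power of \G1\ in the resulting expansion collects only finitely many contributions, and vanishing of the whole expression forces every grosspower coefficient to equal zero.

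Matching coefficients, the $\G1^1$ term yields $\grad h(x^{*0}) \, h(x^{*0}) = 0$. By LICQ at $x^{*0}$ the gradients $\grad h_j(x^{*0})$ are linearly independent, so $\grad h(x^{*0})$ has full column rank $p$, and therefore $h(x^{*0}) = 0$; thus $x^{*0}$ is feasible for~\refe{nlp:eq}. Using $h(x^{*0}) = 0$, the cross term $H^{(1)}(x^*) \, h(x^{*0})$ vanishes in the $\G1^0$ coefficient of $\G1 \, \grad h(x^*) \, h(x^*)$, which therefore reduces to $\grad h(x^{*0}) \, h^{(1)}(x^*)$. Equating the finite part of the stationarity equation to zero gives
\[ \grad f(x^{*0}) + \grad h(x^{*0}) \, h^{(1)}(x^*) = 0. \]
Setting $\pi^* \ldef h^{(1)}(x^*) \in \R^p$, which is finite by Assumption~\ref{assumpt:expand}, this is precisely $\grad_x L(x^{*0}, \pi^*) = 0$. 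Since~\refe{nlp:eq} has no inequality constraints, feasibility of $x^{*0}$ together with Lagrangian stationarity constitute the full KKT system, so $(x^{*0}, \pi^*)$ is a KKT point as claimed.

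The main obstacle is bookkeeping rather than depth: I must verify that Assumption~\ref{assumpt:expand} genuinely lets the grosspower coefficients be matched independently (no infinite summations across levels), and carefully track which cross terms in the product $\grad h(x^*) \, h(x^*)$ survive after feasibility kills $h(x^{*0})$. The substantive mathematics is short---the divergent $\G1$--order term, through LICQ, enforces feasibility, and only then does the finite part identify $h^{(1)}(x^*)$ as the Lagrange multiplier---but the role of Assumption~\ref{assumpt:expand} in justifying the expansion of the nonlinear term $\grad h(x^*) h(x^*)$ must be checked explicitly.
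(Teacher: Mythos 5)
Your proposal is correct and follows essentially the same route as the paper's own proof: differentiate the penalty objective, expand in grosspowers via Assumption~\ref{assumpt:expand}, use the $\G1^1$ coefficient together with LICQ to obtain $h(x^{*0})=0$, and then use feasibility to kill the cross term $H^{(1)}(x^*)\,h(x^{*0})$ in the $\G1^0$ coefficient, leaving exactly the Lagrangian stationarity condition with $\pi^* = h^{(1)}(x^*)$. The only difference is cosmetic (matrix notation in place of the paper's componentwise sums), so nothing further is needed.
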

\begin{proof}
Since $x^* = {x}^{*0} + \G1^{-1} {x}^{*1} + \G1^{-2} {x}^{*2}+\ldots$
is a stationary point we have that

\[ \nabla f(x^*) +  \G1  \sum_{j=1}^p \nabla h_j(x^*)h_j(x^*)= 0 \]

Therefore, from Assumption \ref{assumpt:expand}
\begin{eqnarray*}
 \nabla f(x^{*0})+ \G1^{-1} F^{(1)} (x^*) + \G1^{-2}F^{(2)}(x^*) + \ldots + \\
  + \ds \G1 \sum_{j=1}^p \Biggl [ \left ( \nabla h_j(x^{*0})+ \G1^{-1} H_j^{(1)} (x^*) + \G1^{-2}H_j^{(2)}(x^*) + \ldots \right) \\ \ds\hspace*{5em} \left (
h_j(x^{*0}) + \G1^{-1} h_j^{(1)} (x^*) + \G1^{-2}h_j^{(2)}(x^*) +\ldots
\right) \Biggr ] & = & 0
\end{eqnarray*}
and rearranging the terms we obtain:
\begin{eqnarray*}
\G1 \Biggl ( \sum_{j=1}^p  \nabla h_j(x^{*0})h_j(x^{*0}) \Biggr ) + \\
+ \Biggl (
       \nabla f(x^{*0})
   + \sum_{j=1}^p  \nabla h_j(x^{*0}) h_j^{(1)} (x^*) + \sum_{j=1}^p  H_j^{(1)} (x^*) h_j(x^{*0})
\Biggr )  + \\
+ \G1^{-1} \Biggl(\ldots \Biggr ) + \G1^{-2} \Biggl ( \ldots \Biggr ) + .... & = & 0
\end{eqnarray*}
Hence,
\[  \sum_{j=1}^p  \nabla h_j(x^{*0})h_j(x^{*0}) = 0  \]
and, from the LICQ condition, it follows that
\begin{equation} h(x^{*0}) = 0. \label{eq:hfeas} \end{equation}
Therefore, the point  $x^{*0}$ is feasible for \refe{nlp:eq}. Moreover, from
 \[ \nabla f(x^{*0})
   + \sum_{j=1}^p  \nabla h_j(x^{*0}) h_j^{(1)} (x^*) + \sum_{j=1}^p  H_j^{(1)} (x^*) h_j(x^{*0})= 0\]
and \refe{eq:hfeas} above, it follows that
\begin{equation} \nabla f(x^{*0})
   + \sum_{j=1}^k  \nabla h_j(x^{*0}) h_j^{(1)} (x^*) = 0 \end{equation}
that shows that  $\left(x^{*0},{\pi^*}=  h^{(1)}({x^*})§ \right)$ is a  KKT  point of \refe{nlp:eq}.
\end{proof}

\subsubsection{A simple example}
Consider the following simple 2--dimensional optimization problem with a single linear constraint\footnote{The example is taken from \citep{Liuzzi:note}}:
\eprobmin{x}{\ds \frac{1}{2}x_{1}^{2}+\frac{1}{6}x_{2}^{2}}
     { x_{1}+x_{2}= 1 } {nlp:ex1_a}
The pair $\left(\xbar, \pibar\right)$ with $\xbar = \left [\ba{c} {1}/{4} \\ {3}/{4}\ea \right ]$, $\ds \pibar = -\frac{1}{4}$ is a KKT point.
The corresponding unconstrained optimization problem is
\begin{equation} \minimize_x \frac{1}{2}x_{1}^{2}+\frac{1}{6}x_{2}^{2}- \frac{1}{2}\G1(1- x_{1}-x_{2})^{2} \label{ex1:b} \end{equation}
and
the first Order Optimality Conditions are
\[
\left\{ \begin{array}{l} x_{1}- \G1(1- x_{1}-x_{2})= 0  \\
\frac{1}{3}x_{2} - \G1(1- x_{1}-x_{2})= 0 \\
 \end{array}
 \right . \]
Therefore, the point
\[     x^*_1 = \frac{1\G1}{1 + 4 \G1},\quad \quad  x^*_2 = \frac{3\G1}{1 + 4 \G1}\]
is a stationary point of Problem \refe{ex1:b}.
Note that
\[ \ba{ccl}    x^*_1 & = &  \frac{1}{4}- \G1^{-1} \left( \frac{1}{16} - \frac{1}{64}\G1^{-1} \ldots \right ) \\ \\
x^*_2 & = &\frac{3}{4}- \G1^{-1} \left( \frac{3}{16} - \frac{3}{64}\G1^{-1}\ldots \right )  \ea \]
and $x^{*0}_1 = \frac{1}{4} = \xbar_1$ and $x^{*0}_2 = \frac{3}{4} = \xbar_2$. Moreover,
\begin{align*}   {-\G1(1- x^*_{1}-x^*_{2})}
  = & -\G1 \Bigl(1- \frac{1}{4}+ \G1^{-1} \frac{1}{16} - \frac{1}{64}\G1^{-2}\ldots  \\
  & \hspace*{5em} -\frac{3}{4}+ \G1^{-1} \frac{3}{16} - \frac{3}{64}\G1^{-2}\ldots \Bigr) \\
   = & -\G1\left( \G1^{-1} \frac{1}{16}+ \G1^{-1}\frac{3}{16} - \G1^{-2}\frac{4}{64}\ldots \right) \\
   = & -\frac{1}{4}+ \frac{4}{64} \G1 ^{-1}\ldots
  \end{align*}
and $ h^{(1)}({x^*}) = -\frac{1}{4} = \pibar$

\subsection{The general constrained case.} \label{subsect:ineq}
In this subsection we return back to the most general nonlinear optimization problem \refe{nlp:gen} with equality and inequality constraints and we introduce the following corresponding unconstrained optimization problem:
\begin{equation}
\min_x f(x) + \frac{1}{2}\G1  \left\| \max\{0, g_{i}(x)\}\right\|^{2} + \frac{1}{2}\G1  \left\| h(x) \right\|^{2}
\label{unconst:ineq} \end{equation}

In addition to the conditions stated in Assumption \ref{assumpt:expand} we similarly require that, under the same hypothesis on $x$, the following conditions hold true:

\begin{eqnarray*}
 g(x) &= & g(x^0) + \G1^{-1} g^{(1)} (x) + \G1^{-2}g^{(2)}(x) +\ldots \\
\nabla g(x) & = & \nabla g(x^{0})+ \G1^{-1} G^{(1)} (x) + \G1^{-2}G^{(2)}(x) + \ldots
\end{eqnarray*}

In order to derive optimality conditions we need a modified version of the LICQ condition.

\begin{definition}\label{def:LICQmod}
Let $x^0 \in \R^n$. The Modified LICQ (MLICQ) condition is said to hold true at $x^0$ if the  vectors $\Bigl \{\grad g_i(x^0)_{ i:g_i(x^0)\geq0},  \grad h_j(x^0)_{j = \oneto{k}} \Bigr\}$ are linearly independent.
\end{definition}

The following theorem shows the relationship between stationary points of Problem \refe{unconst:ineq} and KKT points of the general optimization Problem \refe{nlp:gen}.
\begin{thm}
Consider the Problem \refe{nlp:gen} and the corresponding unconstrained Problem \refe{unconst:ineq}.
Let
\[{x^*} = {x}^{*0} + \G1^{-1} {x}^{*1} + \G1^{-2} {x}^{*2}+\ldots\]
 be a stationary point for \refe{unconst:ineq} and assume that MLICQ condition holds true at ${x}^{*0}$. Then,
the triplet  $ \Bigl (x^{*0}, {\mu^*}, {\pi^*}= h^{(1)}({x^*}) \Bigr)$ is a  KKT  point of \refe{nlp:gen}{ where
\[ {\mu_i^*}= \left \{ \ba{ll} 0 & \insmat{ if } g_i(x^{*0}) < 0 \\
                          \max \left \{0, g_i^{(1)}( x^* ) \right \} & \insmat{ if } g_i(x^{*0}) = 0 \ea \right . \]}
\end{thm}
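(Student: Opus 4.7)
The plan is to follow the same scheme as in the proof for the equality-constrained case, but to treat the nonsmooth quadratic $\|\max\{0,g(x)\}\|^{2}$ with care. The key observation is that if we split the inequality constraints according to the sign of $g_i(x^{*0})$, the grossone expansion of $\max\{0, g_i(x^*)\}$ can be read off unambiguously at the orders of \G1 that are relevant for extracting the KKT conditions.

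The first step is to write the stationarity condition of \refe{unconst:ineq} at $x^*$, namely
\begin{equation*}
\nabla f(x^*) + \G1 \sum_{i=1}^m \nabla g_i(x^*)\max\{0, g_i(x^*)\} + \G1 \sum_{j=1}^p \nabla h_j(x^*) h_j(x^*) = 0,
\end{equation*}
then to substitute the grossone expansions of $\nabla f$, $\nabla g$, $\nabla h$, $g$ and $h$ from Assumption \ref{assumpt:expand} and its stated analogue for $g$, and finally to collect terms by powers of \G1. Indices $i$ with $g_i(x^{*0}) < 0$ contribute nothing (the max vanishes to all orders in the infinitesimal neighbourhood of $x^{*0}$); indices $i$ with $g_i(x^{*0}) > 0$ produce a $\G1$-order term $\G1\,\nabla g_i(x^{*0})\, g_i(x^{*0})$; active indices behave as in the equality case and only contribute at order $\G1^0$ or below. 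Setting the coefficient of $\G1^{1}$ to zero and invoking MLICQ forces every such coefficient to vanish, which gives $g_i(x^{*0}) \leq 0$ for all $i$ and $h(x^{*0}) = 0$, so that $x^{*0}$ is feasible for \refe{nlp:gen}.

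With feasibility established, the coefficient of $\G1^{0}$ can be read off. For active inequalities the expansion $\max\{0, g_i(x^*)\} = \max\{0, g_i^{(1)}(x^*)\}\G1^{-1} + O(\G1^{-2})$ produces a contribution $\mu_i^* \nabla g_i(x^{*0})$ with $\mu_i^* = \max\{0, g_i^{(1)}(x^*)\}$, while for the equality constraints an analogous computation (using $h_j(x^{*0}) = 0$ already proved) yields $\pi_j^* \nabla h_j(x^{*0})$ with $\pi_j^* = h_j^{(1)}(x^*)$. The resulting identity
\begin{equation*}
\nabla f(x^{*0}) + \sum_{i=1}^m \mu_i^* \nabla g_i(x^{*0}) + \sum_{j=1}^p \pi_j^* \nabla h_j(x^{*0}) = 0
\end{equation*}
is the KKT gradient condition. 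Nonnegativity $\mu_i^* \geq 0$ is built into the outer max in the definition of $\mu_i^*$, and complementarity $\mu_i^* g_i(x^{*0}) = 0$ is automatic from the case split in the definition, since $\mu_i^* = 0$ whenever $g_i(x^{*0}) < 0$.

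The main obstacle I anticipate is the rigorous handling of $\max\{0, g_i(x^*)\}$ at active constraints, particularly in the boundary case $g_i^{(1)}(x^*) = 0$: one then has to descend to the first non-vanishing $g_i^{(k)}(x^*)$ to determine the sign of $g_i(x^*)$, and verify that the definition $\mu_i^* = 0$ is still consistent with the fact that $\G1\,\nabla g_i(x^*) \max\{0, g_i(x^*)\}$ contributes nothing at order $\G1^0$. A secondary delicate point is justifying that the grossone series for $\max\{0, g_i(x^*)\}$ can be obtained termwise from that of $g_i(x^*)$, which ultimately reduces to the observation that the sign of a grossone number is fixed by its leading nonzero grossdigit.
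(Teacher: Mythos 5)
Your proposal is correct and follows essentially the same route as the paper's proof: write the stationarity condition, substitute the grossone expansions, split the inequality constraints by the sign of $g_i(x^{*0})$, use the vanishing of the $\G1^{1}$ coefficient together with MLICQ to obtain feasibility, and then read off the $\G1^{0}$ coefficient to get the KKT gradient condition with the stated multipliers. The corner case you flag (active constraint with $g_i^{(1)}(x^*)=0$) is handled implicitly in the paper by keeping the whole tail inside the max, i.e.\ $\max\{0, g_i(x^*)\}=\G1^{-1}\max\{0,g_i^{(1)}(x^*)+\G1^{-1}g_i^{(2)}(x^*)+\ldots\}$, which yields the same $\G1^{0}$ contribution $\nabla g_i(x^{*0})\max\{0,g_i^{(1)}(x^*)\}$ that you derive.
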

\begin{proof}
Since $x^* = {x}^{*0} + \G1^{-1} {x}^{*1} + \G1^{-2} {x}^{*2}+\ldots$ is a stationary point we have

\[ \nabla f(x^*) + \G1 \sum_{i=1}^m\nabla g_{i}(x^*)\max \left \{0, g_{i}(x^*) \right \} + \G1  \sum_{j=1}^p \nabla h_j(x^*)h_j(x^*)= 0. \]
From Assumption \ref{assumpt:expand} we obtain
\[
\ba{lcc}
 \ds \nabla f(x^{*0})+ \G1^{-1} F^{(1)} (x^*) + \G1^{-2}F^{(2)}(x^*) + \ldots + \\
+ \ds \G1 \sum_{i=1}^m  \Biggl [ \left ( \nabla g_i(x^{*0})+ \G1^{-1} G_i^{(1)} (x^*) + \G1^{-2}G_i^{(2)}(x^*) + \ldots \right) \\ \ds\hspace*{5em} \left (
\max \left \{0, g_i(x^{*0}) + \G1^{-1} g_i^{(1)} (x^*) + \G1^{-2}g_i^{(2)}(x^*) +\ldots \right \}
\right)\Biggr ] + \\
+ \ds \G1 \sum_{j=1}^p \Biggl [ \left ( \nabla h_j(x^{*0})+ \G1^{-1} H_j^{(1)} (x^*) + \G1^{-2}H_j^{(2)}(x^*) + \ldots \right) \\ \ds\hspace*{5em} \left (
h_j(x^{*0}) + \G1^{-1} h_j^{(1)} (x^*) + \G1^{-2}h_j^{(2)}(x^*) +\ldots
\right) \Biggr ]& = & 0
\ea \]
Note that the following implications hold:
\begin{eqnarray*}
g_i(x^{*0}) > 0 &  \Rightarrow & \max \left \{0, g_{i}(x^*) \right \} =
g_i(x^{*0}) + \G1^{-1} g_i^{(1)}( x^* )+ \G1^{-2} g_i^{(2)}(x^*) \ldots  \\
g_i(x^{*0})  < 0 &\Rightarrow & \max \left \{0, g_{i}(x^*) \right \} = 0 \\
g_i(x^{*0})  = 0 &\Rightarrow & \max \left \{0, g_{i}(x^*) \right \} = \G1^{-1}
\max \left \{0, g_i^{(1)}(x^*)+ \G1^{-1} g_i^{(2)}(x^*)+ \ldots \right \}.
\end{eqnarray*}
Therefore,
\[
\ba{lcc}
 \ds \nabla f(x^{*0})+ \G1^{-1} F^{(1)} (x^*) + \G1^{-2}F^{(2)}(x^*) + \ldots + \\
+ \ds \G1 \sum_{\substack{i=1\\{g_i(x^{*0}) > 0}}}^m  \Biggl [ \left ( \nabla g_i(x^{*0})+ \G1^{-1} G_i^{(1)} (x^*) + \G1^{-2}G_i^{(2)}(x^*) + \ldots \right) \\ \ds\hspace*{5em} \left (
                 g_i(x^{*0}) + \G1^{-1} g_i^{(1)} (x^*) + \G1^{-2}g_i^{(2)}(x^*) +\ldots
%\max \left \{0, g_i(x^{*0}) + \G1^{-1} g_i^{(1)} (x^*) + \G1^{-2}g_i^{(2)}(x^*) +\ldots \right \}
\right)\Biggr ] + \\
+ \ds \G1 \sum_{\substack{i=1\\{g_i(x^{*0}) = 0}}}^m  \Biggl [ \left ( \nabla g_i(x^{*0})+ \G1^{-1} G_i^{(1)} (x^*) + \G1^{-2}G_i^{(2)}(x^*) + \ldots \right) \\ \ds\hspace*{5em} \left (\G1^{-1} \max \left\{0,   g_i^{(1)}(x^*)+ \G1^{-1} g_i^{(2)}(x^*)+ \ldots \right \}
\right)\Biggr ] + \\
+ \ds \G1 \sum_{j=1}^p \Biggl [ \left ( \nabla h_j(x^{*0})+ \G1^{-1} H_j^{(1)} (x^*) + \G1^{-2}H_j^{(2)}(x^*) + \ldots \right) \\ \ds\hspace*{5em} \left (
h_j(x^{*0}) + \G1^{-1} h_j^{(1)} (x^*) + \G1^{-2}h_j^{(2)}(x^*) +\ldots
\right) \Biggr ]& = & 0
\ea \]
Similarly to the equality constraint case, after rearranging terms we obtain:
\[ \sum_{\substack{i=1\\{g_i(x^{*0}) \geq 0}}}^m  \nabla g_i(x^{*0})  g_i(x^{*0}) +
       \sum_{j=1}^p  \nabla h_j(x^{*0})h_j(x^{*0}) = 0 \]
and, since MLICQ holds at $x^{*0}$, $g(x^{*0}) \leq 0$ and $h(x^{*0}) = 0$.
Morever, taking into account that $g(x^{*0}) \leq 0$, we obtain that
\[ \nabla f(x^{*0}) +
       \sum_{\substack{i=1\\{g_i(x^{*0}) = 0}}}^m \nabla g_i(x^{*0})  \max \left \{0, g_i^{(1)}( x^* ) \right \}
   +   \sum_{j=1}^p  \nabla h_j(x^{*0}) h_j^{(1)} (x^*)  = 0 \]
that completes the proof.
\end{proof}
It is worth to note that, when $g_i(x^{*0})  = 0$, it is still possible that  $ g_i^{(1)}(x^*)+ \G1^{-1} g_i^{(2)}(x^*)+ \ldots > 0$ and, therefore, the quantity $\ds \sum_{\substack{i=1\\{g_i(x^{*0}) = 0}}}^m \nabla g_i(x^{*0}) g_i^{(1)}(x^*)$ is present in the last equation of the proof.

\subsubsection{A second example}
Also in this case we present a very simple example to clarify the previous result.
%Consider the 2--dimensional  problem\footnote{The example is taken from \citep{Liuzzi:note}}:
%\eprobmin{x }{(x_1-1)^{2} + (x_2-1)^{2}}{
%       x_1^{2} + x_2^{2} \leq 1  \\
%      s_1^{2} - 2x_1 -x_2 \leq -1}{ex1:a}
%The points $\left [ \ba{c}0 \\ 1\ea \right ]$ and $\left [ \ba{c}1 \\ 0\ea \right ]$   are optimal solutions with multipliers $\left [ \ba{c}\half \\ 1\ea \right ]$ and $\left [ \ba{c}0 \\ 2\ea \right ]$   respectively.
%The corresponding unconstrained minimization problem is
%\begin{equation}\minimize_x  \left(x_1-1\right)^{2} + \left(x_2-1\right)^{2} + \frac{\G1}{2}  \bigl \| \max \left \{ 0, 1 -x_1^{2} -x_2^{2} , -1- x_1^{2} + 2x_1 + x_2  \right\} \bigr\|^{2}. \label{ex2:b} \end{equation}
Consider the minimization problem
\eprobmin{x\in \R}{x}{x \geq 1}{ex2:a}
(\ie $g(x) = 1-x$)  for which the optimal solution is $\xbar = 1$ with associated multiplier $\mubar=1$. The corresponding unconstrained optimization problem is
\begin{equation} \minimize_{x \in \R} x + \frac{\G1}{2} \insmat{max}\left \{0, 1-x \right \}^2. \label{ex2:b} \end{equation}
The first order optimality condition is
\[ 1 - \G1 \insmat{max}\left \{0, 1-x \right \} = 0. \]
The above equation has no solution when $1-x \leq 0$ and for $x < 1$ the only solution is
\[ x^* = \frac{\G1 - 1}{\G1} = 1 - \G1^{-1}  \]
Therefore, as expected, $x^{*0} = 1$. Moreover, $g(x^*) = 1 -  \left ( 1 - \G1^{-1}\right ) = \G1^{-1}$ and $\mu^* = 1$.

\section{Conclusions} \label{sect_conclusions}
In this paper we have presented possible uses of \G1 in Linear and Nonlinear Programming. In the new numerical system, making full use of \G1, it is possible to implement,  in a very simple way, anti--cycling strategies for  the simplex method. Moreover, we have shown that exact, differentiable penalty methods can be constructed for general  nonlinear programming. These are not the only possible applications of \G1 in Operations Research and Mathematical Programming. Another  interesting application is in Data Envelopment Analysis (DEA) methodology.  In the basic version proposed by Charnes, Cooper and Rhodes \citep{ccr:78} the use of a infinitesimal non--archimedean quantity $\epsilon$ is proposed. Negative power of \G1 will allow to achieve the same theoretical results and, thus, the efficiency of a single Decision Making Unit (DMU)  can be easily obtained by solving a single Linear Programming problems using the new arithmetic based on \G1.
\section*{Acknowledges}
The authors would like to thank Prof. Gaetano Zanghirati and Giada Galeotti for their useful suggestions on the definition of the multipliers for the general nonlinear case.
%% The Appendices part is started with the command \appendix;
%% appendix sections are then done as normal sections
%% \appendix

%% \section{}
%% \label{}

%% References
%%
%% Following citation commands can be used in the body text:
%% Usage of \cite is as follows:
%%   \cite{key}         ==>>  [#]
%%   \cite[chap. 2]{key} ==>> [#, chap. 2]
%%

%% References with bibTeX database:
%\section*{References}
%\nocite{*}
%\addcontentsline{toc}{chapter}{Bibliography}
%\bibliographystyle{elsarticle-num-names}

%\bibliographystyle{elsarticle-num-names}
%\bibliographystyle{elsarticle-num-names}
%\bibliographystyle{plain}
%\bibliography{biblio}

\begin{thebibliography}{00}
%% \bibitem must have the following form:
%%   \bibitem{key}...
%%
\bibitem{Boyd.Vandenberghe.04:convex}
S.~Boyd and L.~Vandenberghe.
\newblock {\em Convex Optimization}.
\newblock Cambridge University Press, 2004.

\bibitem{Charnes}
A.~Charnes.
\newblock Optimality and degeneracy in linear programming.
\newblock {\em Econometrica}, 20(2):160--170, 1952.

\bibitem{ccr:78}
A.~Charnes, W.W. Cooper, and E.~Rhodes.
\newblock Measuring the efficiency of decision-making units.
\newblock {\em European Journal of Operational Research}, 2:429–--444, 1978.

\bibitem{Dantzig}
G.B. Dantzig.
\newblock {\em Linear Programming and Extensions}.
\newblock Princeton University Press, 1963.

\bibitem{gow:55}
George~B. Dantzig, Alex Orden, and Philip Wolfe.
\newblock The generalized simplex method for minimizing a linear form under
  linear inequality restraints.
\newblock {\em Pacific Journal of Mathematics}, 5(2):183--195, 1955.

\bibitem{dipillo.grippo:exact}
G.~Di~Pillo and L.~Grippo.
\newblock An exact penalty method with global convergence properties for
  nonlinear programming problems.
\newblock {\em Mathematical Programming}, 36:1--18, 1986.

\bibitem{fletcher.90:practical}
R.~Fletcher.
\newblock {\em Practical Methods of Optimization, Second Edition}.
\newblock John Wiley \& Sons, 1990.

\bibitem{Liuzzi:note}
Giampaolo Liuzzi.
\newblock Metodi di ottimizzazione vincolata.
\newblock {L}ecture {N}otes,
  http://www.iasi.cnr.it/~liuzzi/corsodottorato2004/Vincolato.pdf, 2007.

\bibitem{mangasarian.69:nonlinear}
O.L. Mangasarian.
\newblock {\em Nonlinear Programming}.
\newblock McGraw--Hill, New York, 1969.

\bibitem{Sergeyev:2}
Yaroslav~D. Sergeyev.
\newblock {\em Arithmetic of Infinity}.
\newblock Edizioni Orizzonti Meridionali, CS, 2003.

\bibitem{Sergeyev:1}
Yaroslav~D. Sergeyev.
\newblock A new applied approach for executing computations with infinite and
  infinitesimal quantities.
\newblock {\em Informatica}, 19(4):567--–596, 2008.

\bibitem{Sergeyev:3}
Yaroslav~D. Sergeyev.
\newblock Numerical computations and mathematical modelling with infinite and
  infinitesimal numbers.
\newblock {\em Journal of Applied Mathematics and Computing}, 29:177–195, 2009.

\bibitem{Sergeyev:4}
Yaroslav~D. Sergeyev.
\newblock Numerical point of view on calculus for functions assuming finite,
  infinite, and infinitesimal values over finite, infinite, and infinitesimal
  domains.
\newblock {\em Nonlinear Analysis Series A: Theory,Methods \& Applications},
  71(12):e1688–e1707, 2009.

\bibitem{solodov.cq:10}
M.V. Solodov.
\newblock Constraint qualifications.
\newblock In J.~Cochran, J, editor, {\em Wiley Encyclopedia of Operations
  Research and Management Science}. John Wiley \& Sons, Inc, 2010.
\newblock to appear.

\bibitem{Terlaky}
Tamas Terlaky.
\newblock Lexicographic pivoting rules.
\newblock In C.A. Floudas and P.M. Pardalos, editors, {\em Encyclopedia of
  Optimization}, volume~6, pages 173–--177. Kluwer Academic Press, 2009.
\end{thebibliography}

%% Authors are advised to submit their bibtex database files. They are
%% requested to list a bibtex style file in the manuscript if they do
%% not want to use elsarticle-num.bst.

%% References without bibTeX database:

\end{document}